\newtheorem{thm}{Theorem}[section]
\newtheorem{defn}[thm]{Definition}
\newtheorem{ex}[thm]{Example}
\newtheorem{prop}[thm]{Proposition}
\newtheorem{lemma}[thm]{Lemma}
\DeclareMathOperator{\Inv}{Inv}
\DeclareMathOperator{\Fix}{Fix}
\DeclareMathOperator{\tr}{tr}
\newcommand*{\os}[1]{[ #1 ]}
\newcommand*{\sC}{\mathsf{C}}
\newcommand*{\fsl}{\mathfrak{sl}}
\newcommand*{\ct}{\mathbf{ct}}
\newcommand*{\Dfn}[1]{\emph{\color{blue}#1}}
\newcommand*{\bQ}{\mathbb{Q}}
\newcommand*{\bZ}{\mathbb{Z}}
\newcommand*{\fC}{\mathfrak{C}}
\newcommand*{\fS}{\mathfrak{S}}
\title{Interpolating between promotion and the long cycle}
\author{Bruce W. Westbury}
\email{Bruce.Westbury@utdallas.edu}
\address{Mathematical Sciences \\
	University of Texas at Dallas \\ 800 W Campbell Rd. \\ Richardson \\ TX 75080 \\ USA}
\date{June 2019}
\begin{document}
\begin{abstract} We give a new proof of the cyclic sieving phenomena
for promotion on rectangular standard tableaux. This uses an action
of the cactus groups in the seminormal bases of the irreducible 
representations of the Hecke algebras.
\end{abstract}

\maketitle
\tableofcontents
%\makeatletter
%\providecommand\@dotsep{5}
%\makeatother
%\listoftodos\relax

\section{Introduction}
The cyclic sieving phenomenon for rectangular standard tableaux was first
proved in \cite{Rhoades2010}. This proof used the Khazdhan-Lusztig
basis of the Hecke algebras. This result has been reproved in \cite{Purbhoo2013} using the geometry of the Wronksian and has been generalised in using \cite{Fontaine2014} using the geometry
of the affine Grassmannian and in \cite{Westbury2016} using Lusztig's
based modules. Here we give a new proof which is relatively self-contained
using the action of the cactus groups in the seminormal bases for the
irreducible representations of the Hecke algebras.

Our main theorem has a straightforward statement.
Let $\lambda$ be a partition of size $r$ and let $U_\lambda$ be the
associated irreducible representation of the symmetric group, $\fS_r$,
over the rational field, $\bQ$. Let $c_1$ be the matrix representing the
long cycle with respect to a chosen basis.
Let $c_0$ be the permutation matrix of the jeu-de-taquin promotion acting
on the set of standard tableaux of shape $\lambda$.
Then our main theorem is:
\begin{thm}\label{thm:main} If the partition $\lambda$ has rectangular shape then the matrices $c_1$ and $c_0$ are conjugate.
\end{thm}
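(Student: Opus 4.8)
The plan is to exhibit a single element $c$ of the cactus group $J_r$ that simultaneously realizes the long cycle on the representation side and promotion on the combinatorial side, and then to compare the two resulting matrices in the seminormal basis. Recall that $J_r$ is generated by involutions $s_{[p,q]}$, one for each interval $1\le p\le q\le r$, and that there is a surjection $J_r\to\fS_r$ sending $s_{[p,q]}$ to the reversal $w_{[p,q]}$ of the interval $[p,q]$. A direct computation gives the factorization $(1,2,\dots,r)=w_{[1,r]}\,w_{[1,r-1]}$ of the long cycle, so the element $c:=s_{[1,r]}\,s_{[1,r-1]}$ maps to the long cycle in $\fS_r$. Since the conjugacy class of $c_1$ does not depend on the chosen basis, I take that basis to be the seminormal (Gelfand--Tsetlin) basis of $U_\lambda$; then the matrix by which $c$ acts on $U_\lambda$ is exactly $c_1$.

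There are two actions of $c$ to compare. Combinatorially, each generator $s_{[p,q]}$ acts on the standard tableaux of shape $\lambda$ as the partial Sch\"utzenberger (evacuation) involution on the entries $\{p,\dots,q\}$, and a classical identity expresses promotion as the composite of the full evacuation with the evacuation of the entries $\{1,\dots,r-1\}$. Hence $c$ acts on standard tableaux as promotion, and its permutation matrix is $c_0$. Representation-theoretically, I would use that the seminormal basis $\{v_T\}$ is the simultaneous eigenbasis of the Jucys--Murphy elements, with $v_T$ recording the contents of the cells of $T$. The Sch\"utzenberger involutions intertwine the Jucys--Murphy elements with their reflections (negation together with reversal of the index), so each generator $s_{[p,q]}$ carries content eigenvectors to content eigenvectors; since distinct standard tableaux have distinct content data, the eigenspaces are one-dimensional, forcing $s_{[p,q]}$ to act on $\{v_T\}$ by a monomial (signed permutation) matrix whose underlying permutation is precisely the combinatorial cactus action above.

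Combining the two observations, the matrix of $c$ in the seminormal basis, namely $c_1$, is a signed permutation matrix whose underlying permutation is promotion. The theorem therefore reduces to showing that this signed permutation matrix is conjugate to the genuine permutation matrix $c_0$. Equivalently, one must rescale the seminormal basis by suitable factors $\pm 1$ so that $c$ acts by an honest permutation; this is possible exactly when the product of the occurring signs around each orbit of promotion equals $+1$.

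I expect the control of these signs to be the main obstacle, and it is here that the rectangular hypothesis must enter. Concretely, I would compute the scalar by which each evacuation generator acts on a seminormal vector, propagate these scalars through the product $s_{[1,r]}\,s_{[1,r-1]}$, and verify that for a rectangular shape the signs cancel around every promotion orbit. That the argument should fail without rectangularity is consistent with the cyclic sieving statement itself being false for general $\lambda$: in that case the signs genuinely obstruct conjugacy, whereas for rectangular $\lambda$ the symmetry of the diagram under $180^\circ$ rotation is what should force them to be trivial.
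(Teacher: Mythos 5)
There is a genuine gap at the central step of your argument: the claim that each cactus generator $s_{[p,q]}$, and hence your element $c=s_{[1,r]}s_{[1,r-1]}$, acts on the seminormal basis by a monomial (signed permutation) matrix is false. The paper's own introductory example already refutes it: for $\lambda=(2,2,2)\vdash 6$ the displayed $5\times 5$ ``rotation matrix'' --- which is exactly the long cycle written in the seminormal basis, i.e.\ your $c_1$ --- is dense, not a signed permutation matrix. The Jucys--Murphy justification you offer breaks down because the interval reversal $w_{[p,q]}\in\fS_r$ conjugates $X_k=\sum_{j<k}(j,k)$ not to another $X_m$ (or its negative) but to a \emph{dual} element of the form $\sum_{i>m}(i,m)$; these generate the Gelfand--Tsetlin algebra of the \emph{opposite} chain of subgroups, which the seminormal basis does not diagonalise. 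So content eigenvectors are not carried to content eigenvectors, the one-dimensionality of eigenspaces buys you nothing, and already the single generator $s_{[2,3]}$ in the two-dimensional representation $U_{(2,1)}$ acts by a $2\times2$ matrix with all four entries nonzero. (The monomial statement you want is a theorem of Berenstein--Zelevinsky and Stembridge for the Kazhdan--Lusztig basis, and it is the engine of Rhoades's original proof; it simply does not hold for the seminormal basis.) Even granting the reduction, your final step --- that for rectangular $\lambda$ the signs cancel around every promotion orbit --- is stated as an expectation rather than proved, so the argument is incomplete on its own terms as well.

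The paper avoids signs entirely by deforming rather than comparing directly: it uses the cactus group action on the seminormal representation of the Hecke algebra $H_r(q)$ to produce a single matrix $c_q$ over $\bQ(q)$ representing $p_{r-1}\in\fC_r$; it shows $c_q^{\,r}=1$ for rectangular shapes via the identity $p_{r-1}^r=w_{r-1}q_{r-1}$ together with the reverse-complement symmetry of rectangles (this, not sign cancellation, is where rectangularity enters); and it checks that $c_q$ specialises to $c_1$ at $q=1$ and, after conjugating by the diagonal matrix with entries $q^{\mathrm{inv}(T)}$, to the promotion permutation matrix $c_0$ at $q=0$. Semisimplicity and continuity of the eigenvalues then force $c_1$ and $c_0$ to be conjugate. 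To salvage your approach you would either have to switch to the Kazhdan--Lusztig basis (thereby reproducing the proof of \cite{Rhoades2010}) or introduce the $q$-interpolation as the paper does.
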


The method of proof is to construct an interpolating matrix. This is a matrix, $c_q$, with entries in the field of rational functions, $\bQ(q)$, with
the properties:
\begin{itemize}
	\item The evaluation of $c_q$ at $q=1$ is defined and gives $c_1$
	\item The evaluation of $c_q$ at $q=0$ is defined and gives $c_0$
	\item $c_q^r=1$
\end{itemize}

This proves Theorem~\ref{thm:main} since $c_q$ is semisimple, the eigenvalues
are $r$-th roots of unity and the eigenvalues are analytic functions of $q$.
Hence the eigenvalues and their multiplicities are independent of $q$.

\begin{ex}
The rotation matrix and its inverse are
\begin{equation*}
\left(\begin{array}{rrrrr}
\frac{1}{3} & \frac{4}{9} & 0 & \frac{2}{3} & 0 \\
\frac{1}{2} & -\frac{1}{12} & \frac{3}{8} & -\frac{1}{8} & \frac{9}{16} \\
1 & -\frac{1}{6} & -\frac{1}{4} & -\frac{1}{4} & -\frac{3}{8} \\
0 & \frac{1}{2} & \frac{3}{4} & -\frac{1}{4} & -\frac{3}{8} \\
0 & 1 & -\frac{1}{2} & -\frac{1}{2} & \frac{1}{4}
\end{array}\right)
\qquad
\left(\begin{array}{rrrrr}
\frac{1}{3} & \frac{4}{9} & \frac{2}{3} & 0 & 0 \\
\frac{1}{2} & -\frac{1}{12} & -\frac{1}{8} & \frac{3}{8} & \frac{9}{16} \\
0 & \frac{1}{2} & -\frac{1}{4} & \frac{3}{4} & -\frac{3}{8} \\
1 & -\frac{1}{6} & -\frac{1}{4} & -\frac{1}{4} & -\frac{3}{8} \\
0 & 1 & -\frac{1}{2} & -\frac{1}{2} & \frac{1}{4}
\end{array}\right)
\end{equation*}

The promotion matrix and its inverse are
\begin{equation*}
\left(\begin{array}{rrrrr}
0 & 0 & 0 & 1 & 0 \\
0 & 0 & 0 & 0 & 1 \\
1 & 0 & 0 & 0 & 0 \\
0 & 0 & 1 & 0 & 0 \\
0 & 1 & 0 & 0 & 0
\end{array}\right)
\qquad
\left(\begin{array}{rrrrr}
0 & 0 & 1 & 0 & 0 \\
0 & 0 & 0 & 0 & 1 \\
0 & 0 & 0 & 1 & 0 \\
1 & 0 & 0 & 0 & 0 \\
0 & 1 & 0 & 0 & 0
\end{array}\right)
\end{equation*}

The matrix which interpolates between promotion and rotation and its inverse are
\begin{equation*}
\left(\begin{array}{rrrrr}
\frac{1}{[3]} & \frac{[4]}{[3]^2} & 0 & \frac{[4]}{[2][3]} & 0 \\
\frac{1}{[2]} & \frac{-1}{[3][2]^2} & \frac{[3]}{[2]^3} & \frac{-1}{[2]^3} & \frac{[3]^2}{[2]^4} \\
1 & \frac{-1}{[2][3]} & \frac{-1}{[2]^2} & \frac{-1}{[2]^2} & \frac{-[3]}{[2]^3} \\
0 & \frac{1}{[2]} & \frac{[3]}{[2]^2} & \frac{-1}{[2]^2} & \frac{-[3]}{[2]^3} \\
0 & 1 & \frac{-1}{[2]} & \frac{-1}{[2]} & \frac{1}{[2]^2}
\end{array}\right)
\qquad
\left(\begin{array}{rrrrr}
\frac{1}{[3]} & \frac{[4]}{[3]^2} & \frac{[2]}{[3]} & 0 & 0 \\
\frac{1}{[2]} & -\frac{1}{[2]^2[3]} & -\frac{1}{[2]^3} & \frac{[3]}{[2]^3} & \frac{[3]^2}{[2]^4} \\
0 & \frac{1}{[2]} & -\frac{1}{[2]^2} & \frac{[3]}{[2]^2} & -\frac{[3]}{[2]^3} \\
1 & -\frac{1}{[2][3]} & -\frac{1}{[2]^2} & -\frac{1}{[2]^2} & -\frac{[3]}{[2]^3} \\
0 & 1 & -\frac{1}{[2]} & -\frac{1}{[2]} & \frac{1}{[2]^2}
\end{array}\right)
\end{equation*}

In this example the matrix which intertwines promotion and rotation can be given explicitly.
This matrix and its inverse are:
\begin{equation*}
\left(\begin{array}{rrrrr}
1 & -\frac{[2]}{[3]} & 0 & 0 & -\frac 1{[2]} \\
0 & 1 & -\frac 1{[2]} & -\frac 1{[2]} & \frac 1{[2]^2} \\ 
0 & 0 & 1 & 0 & -\frac 1{[2]} \\
0 & 0 & 0 & 1 & -\frac 1{[2]} \\
0 & 0 & 0 & 0 & 1
\end{array}\right)
\qquad
\left(\begin{array}{rrrrr}
1 & \frac{[2]}{[3]} & \frac 1{[3]} & \frac 1{[3]} & \frac {[2]}{[3]} \\
0 & 1 & \frac 1{[2]} & \frac 1{[2]} & \frac 1{[2]^2} \\
0 & 0 & 1 & 0 & \frac 1{[2]} \\
0 & 0 & 0 & 1 & \frac 1{[2]} \\
0 & 0 & 0 & 0 & 1
\end{array}\right)
\end{equation*}
\end{ex}

Our construction of the interpolating matrix uses the cactus group $\fC_r$.
This group acts on the set of standard tableaux of shape $\lambda$. This
action is given explicitly in \cite{Kirillov1995} (where the cactus groups are
implicit) and implicitly in \cite{Henriques2006d} (where the cactus groups are
explicit). The relationship between these two papers was established in
\cite{Chmutov2016}.

The Hecke algebra, $H_r$, is a $q$-analogue of the symmetric group algebra $\bQ\,\fS_r$. As a $\bQ(q)$-algebra it is split semisimple and each 
$U_\lambda$ has a $q$-analogue which is also absolutely irreducible.
Our main technical tool is that, for $r>1$, there is a homomorphism
$\fC_r\to H_r$. This homomorphism is constructed implictly using 
quantum Schur-Weyl duality and the result that the category of type I
finite dimensional representations of a quantised enveloping algebra is a
coboundary category; this is part of the construction in \cite{Drinfelprimed1989}.
This implies that, for each $\lambda\vdash r$, there
is an action of $\fC_r$ on $U_\lambda$.
This action was made explicit in \cite{Westbury2018} in that the matrices
representing a set of generators with respect to the seminormal basis
of $U_\lambda$ are given explicitly. Here we make use of these matrices.

The contents of the sections are:
\begin{description}
	\item[Cyclic sieving phenomenon] In this section we give the background
on the cyclic sieving phenomenon and deduce the cyclic sieving phenomenon
from Theorem~\ref{thm:main}.
\item[Cactus groups] In this section we define the cactus groups by
finite presentations and give the results that are needed in the proof of
Theorem~\ref{thm:main}.
\item[Hecke algebras] In this section we recall the construction of
the representations of the Hecke algebra in the seminormal basis and
prove Theorem~\ref{thm:main}.
\item[Conclusion] In this section we put the statement and method of proof
of Theorem~\ref{thm:main} in the context of the representation theory
of quantised enveloping algebras and discuss the main difficulty in
generalising Theorem~\ref{thm:main} to this context.
\end{description}

\section{Cyclic sieving phenomenon}
In this section we explain how Theorem~\ref{thm:main} gives the
cyclic sieving phenomenon in \cite{Rhoades2010}.

Let $r>1$ and $\omega$ be a primitive $r$-th root of unity. Recall the
definition of the cyclic sieving phenomenon from \cite{Reiner2004}.
\begin{defn}
Let $X$ be a finite set and $c\colon X\to X$ a bijection that satisfies
$c^r=1$. If the polynomial $P\in \bZ[q]$ \footnote{Note that the $q$ in this section is not the $q$ used in other sections.} satisfies
\begin{equation*}
	P(\omega^k) = |\Fix(c^k)|
\end{equation*}
for all $k$ where $\Fix$ is the set of fixed points then the triple $(X,c,P)$
exhibits the \Dfn{cyclic sieving phenomenon}.
\end{defn}
Let $\lambda\vdash r$ be a rectangular shape and let $X$ be the set of
standard tableaux of shape $\lambda$. Denote the promotion operator
on $X$ by $p$. The problem is to determine a polynomial $P$ such that
$(X,p,P)$ exhibits the cyclic sieving phenomenon.

The linear version of the cyclic sieving phenomenon is:
\begin{defn}
Let $U$ be a finite dimensional vector space over $\bQ$ and $\rho\colon U\to U$ an isomorphism that satisfies $\rho^r=1$. A \Dfn{character polynomial} is a 
polynomial $P\in \bQ[q]$ that satisfies
\begin{equation*}
	P(\omega^k) = \tr(\rho^k)
\end{equation*}
for all $k$.
\end{defn}

The two basic properties of the character polynomial are:
\begin{itemize}
\item If $\rho$ is the permutation matrix of a bijection $c$ then $(X,c,P)$
satisfies the cyclic sieving phenomenon if and only if $P$ is a
character polynomial of $\rho$.

\item Given $\rho\colon U\to U$ with character polynomial $P$ and
$\rho'\colon U'\to U'$ with character polynomial $P'$ then 
$(U,\rho)$ and $(U',\rho')$ are isomorphic if and only if $P=P'$.
\end{itemize}
It then follows from Theorem~\ref{thm:main} that $(X,p,P)$ exhibits the cyclic sieving phenomenon if and only if $P$ is a character polynomial for the
action of the long cycle on $U_\lambda$.

This character polynomial can be determined using:
\begin{prop}\label{prop:fr} Let $U$ be a representation of $\fS_r$. Then the principal specialisation of the Frobenius character of $U$ is a character polynomial for the action of the long cycle.
\end{prop}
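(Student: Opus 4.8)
The plan is to reduce to the case of an irreducible representation and then carry out a direct computation in the ring of symmetric functions, evaluating a principal specialisation at roots of unity. Since the Frobenius characteristic, the operation of principal specialisation, and the formation of character polynomials are all additive over direct sums, it suffices to prove the statement when $U=U_\lambda$ is irreducible, in which case its Frobenius characteristic is the Schur function $s_\lambda$. Writing $\rho$ for the action of the long cycle $c$ on $U_\lambda$, we have $\tr(\rho^k)=\chi^\lambda(c^k)$, and $c^k$ has cycle type $((r')^d)$ with $d=\gcd(k,r)$ and $r'=r/d$. Thus the task reduces to showing that the principal specialisation $P(q)$ of $s_\lambda$ satisfies $P(\omega^k)=\chi^\lambda((r')^d)$ for every $k$.

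First I would expand $s_\lambda$ in the power-sum basis as $s_\lambda=\sum_{\mu\vdash r} z_\mu^{-1}\chi^\lambda(\mu)\, p_\mu$, so that the character values appear directly as coefficients. Applying the principal specialisation sends each $p_j$ to the geometric factor $1/(1-q^j)$; clearing denominators by $(q;q)_r=\prod_{i=1}^r(1-q^i)$ produces a polynomial
\[
P(q)=\sum_{\mu\vdash r} z_\mu^{-1}\chi^\lambda(\mu)\,\frac{(q;q)_r}{\prod_i (1-q^{\mu_i})},
\]
whose value at $q=1$ is the number of standard tableaux, as required for $P(1)=\tr(\rho^0)$.

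The heart of the argument is the evaluation at $q=\omega^k$, which is a primitive $r'$-th root of unity. Here I would run an order-of-vanishing analysis on each summand: the numerator $(q;q)_r$ vanishes at $\omega^k$ to order $d$, namely the number of $i\in\{1,\dots,r\}$ divisible by $r'$, while the denominator $\prod_i(1-q^{\mu_i})$ vanishes to the order equal to the number of parts of $\mu$ divisible by $r'$. Because such parts are each at least $r'$ and sum to at most $r=dr'$, this order is at most $d$, with equality precisely when $\mu=((r')^d)$. Hence every summand tends to $0$ except the single term indexed by $\mu=((r')^d)$, whose limiting value I would compute using $\lim_{w\to 1}(1-w^j)/(1-w)=j$ together with the cyclotomic identity $\prod_{a=1}^{r'-1}(1-\zeta^a)=r'$; the limit equals exactly $z_{((r')^d)}$, so the surviving term collapses to $\chi^\lambda((r')^d)=\chi^\lambda(c^k)=\tr(\rho^k)$.

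I expect the main obstacle to be precisely this root-of-unity evaluation: each summand is an indeterminate $0/0$ expression at $q=\omega^k$, and the content of the proof is the combinatorial fact that exactly one power-sum index survives in the limit, together with the clean cancellation of its limiting value against $z_\mu$. It is worth noting that the resulting statement is equivalent to the theorem of Kra\'skiewicz--Weyman identifying the eigenvalue multiplicities of the long cycle on $U_\lambda$ with the distribution of the major index over standard tableaux of shape $\lambda$; the direct computation above may be viewed as a self-contained derivation of that fact in the form needed here.
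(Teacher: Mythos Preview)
The paper does not actually prove Proposition~\ref{prop:fr}; it is stated without proof and then invoked in the subsequent Lemma. So there is nothing in the paper to compare your argument against.

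Your argument is correct and is the standard direct computation. The reduction to irreducibles is immediate, the expansion $s_\lambda=\sum_\mu z_\mu^{-1}\chi^\lambda(\mu)\,p_\mu$ is the right starting point, and your order-of-vanishing analysis at a primitive $r'$-th root of unity is exactly what is needed: the key inequality that the number of parts of $\mu$ divisible by $r'$ is at most $d=r/r'$ (with equality only for $\mu=((r')^d)$) is what isolates the single surviving term, and your evaluation of the limit as $z_{((r')^d)}=(r')^d d!$ via $\prod_{a=1}^{r'-1}(1-\zeta^a)=r'$ and $\lim (1-w^j)/(1-w)=j$ is correct. It is perhaps worth saying explicitly that each individual summand $(q;q)_r/\prod_i(1-q^{\mu_i})$ is already a polynomial (the cyclotomic multiplicities match by the same counting argument), so that the limit evaluation is simply evaluation of a polynomial and no further justification is needed.

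One small point of interpretation: the phrase ``principal specialisation'' in the paper is being used loosely. Your $P(q)$ is $(q;q)_r\cdot s_\lambda(1,q,q^2,\dotsc)$, which is the major-index generating function $q^{n(\lambda)}[r]!/\prod[h(c)]$; the Lemma following the Proposition drops the factor $q^{n(\lambda)}$. This does not affect the correctness of your proof of the Proposition as you have interpreted it, but you should be aware of the discrepancy in normalisations.
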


\begin{lemma} Let $\lambda\vdash r$. Take $P$ to be the $q$-analogue of the hook-length formula
	\begin{equation*}
		\frac{[n]!}{\prod_{(i,j)\in \lambda} [h(i,j)]}
	\end{equation*}
	where $h(i,j)$ is the hook length of the cell $(i,j)$ and $[n]$ is given by
	\begin{equation*}
		[n] = \frac{1-q^n}{1-q}
	\end{equation*}
Then $P$ is a character polynomial for the action of the long cycle on $U_\lambda$.
\end{lemma}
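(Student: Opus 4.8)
The plan is to deduce the lemma directly from Proposition~\ref{prop:fr}. The Frobenius characteristic of the irreducible $\fS_r$-module $U_\lambda$ is the Schur function $s_\lambda$, so Proposition~\ref{prop:fr} already guarantees that the principal specialisation of $s_\lambda$ is a character polynomial for the long cycle acting on $U_\lambda$. The entire content of the lemma is therefore the classical identity that identifies this principal specialisation with the displayed $q$-analogue of the hook-length formula; throughout, $n=|\lambda|=r$.

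First I would recall the identification $\mathrm{ch}(\chi^\lambda)=s_\lambda$, so that Proposition~\ref{prop:fr} applies with $U=U_\lambda$. Next I would compute the principal specialisation of $s_\lambda$ using the bialternant $s_\lambda=a_{\lambda+\delta}/a_\delta$, where $a_\mu=\det\bigl(x_i^{\mu_j}\bigr)$. Substituting $x_i=q^{\,i-1}$ turns numerator and denominator into Vandermonde products in the values $q^{\lambda_j+m-j}$ and $q^{m-j}$ respectively, and after cancelling these products one obtains the hook-content form
\begin{equation*}
s_\lambda\bigl(1,q,\dots,q^{m-1}\bigr)=q^{\,n(\lambda)}\prod_{(i,j)\in\lambda}\frac{1-q^{\,m+j-i}}{1-q^{\,h(i,j)}},\qquad n(\lambda)=\sum_i (i-1)\lambda_i .
\end{equation*}
Equivalently one may read this off as the major-index generating function $\sum_{T\in\mathrm{SYT}(\lambda)}q^{\mathrm{maj}(T)}$ via the theory of $P$-partitions. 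Using $\prod_{k=1}^{n}(1-q^{k})=(1-q)^{n}[n]!$ and $\prod_{(i,j)}(1-q^{h(i,j)})=(1-q)^{n}\prod_{(i,j)}[h(i,j)]$, this collapses to $q^{\,n(\lambda)}\,[n]!/\prod_{(i,j)}[h(i,j)]$, which up to the overall factor $q^{\,n(\lambda)}$ is exactly $P$.

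The main point requiring care is precisely this factor $q^{\,n(\lambda)}$ coming from the grading convention: the principal specialisation naturally produces the fake-degree polynomial, and one must check that multiplication by $q^{\,n(\lambda)}$ does not change the values at $r$-th roots of unity that define the character polynomial. This is the delicate step, and it is where the combinatorics of $\lambda$ enters; for the rectangular shapes needed in Theorem~\ref{thm:main} one verifies that the extra power is immaterial because $P$ vanishes at exactly those $r$-th roots of unity at which $q^{\,n(\lambda)}\neq 1$, so the two normalisations agree there. Since the underlying identity—that the principal specialisation of $s_\lambda$ is the $q$-hook-length formula—is entirely classical, one may alternatively cite it and conclude immediately from Proposition~\ref{prop:fr}.
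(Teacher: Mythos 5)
Your route is the same as the paper's: apply Proposition~\ref{prop:fr}, identify the Frobenius character of $U_\lambda$ with the Schur function $s_\lambda$, and compute its principal specialisation. The paper's own proof is exactly this, stated in two lines. Where you go further is in actually carrying out the specialisation, and in doing so you put your finger on a real issue that the paper's proof passes over silently: the principal specialisation of $s_\lambda$ is $q^{n(\lambda)}\,[n]!/\prod_{(i,j)}[h(i,j)]$ with $n(\lambda)=\sum_i(i-1)\lambda_i$, i.e.\ the major-index generating function, not the polynomial $P$ of the statement. To your credit you flag the factor $q^{n(\lambda)}$; the gap is in how you dispose of it.

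The claim that the factor is immaterial because ``$P$ vanishes at exactly those $r$-th roots of unity at which $q^{n(\lambda)}\neq 1$'' is asserted without proof, and it is false. The lemma is stated for arbitrary $\lambda\vdash r$: for $\lambda=(2,1)$, $r=3$, one has $P=[2]=1+q$ and $n(\lambda)=1$, so $P(\omega)=1+\omega=-\omega^2\neq 0$, whereas the trace of the long cycle on the standard representation is $\omega+\omega^2=-1$; the character polynomial is $q+q^2=q^{n(\lambda)}P$, not $P$. The discrepancy persists even for rectangles: for the column $\lambda=(1,1)$ one has $P=1$, but the long cycle acts by $-1$ on the sign representation, so a character polynomial must take the value $-1$ at $q=-1$. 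Thus what your computation (and Proposition~\ref{prop:fr}) actually delivers is that $q^{n(\lambda)}P$ is a character polynomial for the long cycle; the untwisted $P$ of the statement requires the additional assertion that $\omega^{kn(\lambda)}=1$ whenever $P(\omega^k)\neq 0$, which you have not proved and which fails in the examples above. The honest conclusion of your argument is the lemma with the factor $q^{n(\lambda)}$ restored (equivalently, with $P$ the major-index generating function); note that the paper's own one-line proof contains the same unacknowledged jump.
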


\begin{proof} This is an application of Proposition~\ref{prop:fr}.
The Frobenius character of $U_\lambda$ is the Schur function $s_\lambda$;
and the principal specialisation of $s_\lambda$ is given by
the $q$-analogue of the hook-length formula.
\end{proof}

Other interpretations of this polynomial are available. For example,
this polynomial is the generating function for the statistic major index
on the set of standard tableaux of shape $\lambda$.

The conclusion is that if $P$ is the $q$-analogue of the hook-length formula
then $(X,p,P)$ exhibits the cyclic sieving phenomenon. This is the main
theorem of \cite{Rhoades2010}.

\section{Cactus group}\label{sec:cactus}
The finite presentations of the cactus groups are:
\begin{defn}\label{defn:cactus} The \Dfn{$r$-fruit cactus group}, $\fC_r$, has generators $s_{p,\,q}$ for $1\le p<q\le r$ and defining relations
\begin{itemize}
	\item $s_{p,\,q}^2=1$
	\item $s_{p,\,q}\, s_{k,\,l}=s_{k,\,l}\, s_{p,\,q}$ if $\os{p,q}\cap\os{k,l}=\emptyset$
	\item $s_{p,\,q}\, s_{k,\,l}=s_{p+q-l,\,p+q-k}\, s_{p,\,q}$ if $\os{k,l}\subseteq\os{p,q}$
\end{itemize}
\end{defn}

Let $\fS_r$ be the symmetric group on $r$ letters.
There is a homomorphism $\fC_r\rightarrow \fS_r$ defined by $s_{p,\,q}\mapsto \widehat{s}_{p,\,q}$ where $\widehat{s}_{p,\,q}$ is the permutation
\begin{equation}\label{eq:hom}
\widehat{s}_{p,\,q}(i)=\begin{cases}
p+q-i & \text{if $p\le i\le q$} \\
i & \text{otherwise}
\end{cases}
\end{equation}

Note that $\fC_r$ is generated by $s_{1,q}$ for $2\le q\le r$, since
\begin{equation}\label{eqn:gen}
s_{p,\,q}=s_{1,\,q}s_{1,\,q-p}s_{1,\,q}
\end{equation}

Next we introduce several sets of generators for the cactus groups,
following \cite{Kirillov2001}.

The first set is $q_i=s_{1,\,i}$ for $1\leqslant i\leqslant n$.
These are generators, as noted in \eqref{eqn:gen}.
The second set of generators
are $p_i$ given by the relations
\begin{equation*}
p_i=\begin{cases}
q_1 & \text{if $i=1$}\\
q_{i-1}\,q_i & \text{if $i>1$}
\end{cases} \qquad
q_i = p_1\,p_2\dotsb p_i
\end{equation*}
The third set of generators are $t_i$ given by
\begin{equation*}
t_i=\begin{cases}
p_1 & \text{if $i=1$}\\
p_i\,p_{i-1}^{-1} & \text{if $i>1$}
\end{cases} \qquad
p_i = t_i\,t_{i-1}\dotsb t_1
\end{equation*}

Then we also have
\begin{equation*}
t_i=\begin{cases}
q_1 & \text{if $i=1$}\\
q_1\,q_2\,q_1 & \text{if $i=2$}\\
q_{i-1}\,q_i\,q_{i-1}\,q_{i-2} & \text{if $i>2$}
\end{cases}
\end{equation*}

The images of these three sets of generators under the homomorphism to $\fS_r$ are:
\begin{itemize}
\item the image of $t_i$ is the transposition $(i,i+1)$
\item the image of $p_i$ is the cycle $(i,1,2,\dotsc,i-1)$
\item the image of $q_i$ is the involution
\begin{equation*}
q_i(j)=\begin{cases}
i-j+1  & \text{if $1\leqslant j\leqslant i$}\\
j & \text{if $j>i$}
\end{cases}
\end{equation*}
In particular, the image of $p_{r-1}\in\fS_r$ is the long cycle and
the image of $q_r\in\fS_r$ is the longest element.
\end{itemize}

There is a dual version of these generators.
\begin{align*}
	v_i &= t_i\,t_{i+1}\dotsc t_{r-1}\\
	w_i &= v_{r-1}\,v_{r-2}\dotsc v_{r-i}
\end{align*}

The following is \cite[Proposition~1.4]{Kirillov1995} and is also clear from
the growth diagram.
\begin{lemma}\label{lem:cyclic} For $r>1$,
	\begin{equation*}
	p_{r-1}^r=w_{r-1}\,q_{r-1}
	\end{equation*}
\end{lemma}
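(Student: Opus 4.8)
The plan is to verify the identity $p_{r-1}^r = w_{r-1}\,q_{r-1}$ by working entirely inside the cactus group $\fC_r$ using the translation dictionary between the generating sets, and to cross-check the result by mapping everything down to $\fS_r$ via the homomorphism of \eqref{eq:hom}, which (since the images are explicitly known) forces the two sides to at least agree as permutations. Concretely, I would first rewrite both sides in terms of the $q_i = s_{1,i}$ generators, since those are the generators for which we have the cleanest defining relations, namely $q_i^2 = 1$ together with the nesting relation $s_{1,q}\,s_{k,l} = s_{q-l+1,\,q-k+1}\,s_{1,q}$ specialised from Definition~\ref{defn:cactus}.

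First I would expand the left-hand side. Using $p_i = q_{i-1}q_i$ for $i>1$ and $p_1 = q_1$, the element $p_{r-1}$ is $q_{r-2}q_{r-1}$, so $p_{r-1}^r = (q_{r-2}q_{r-1})^r$. The key structural fact is that $q_{r-2}$ and $q_{r-1}$ generate a subgroup in which the nesting relation applies, and the product $q_{r-2}q_{r-1}$ maps to the long cycle in $\fS_r$ (as recorded in the excerpt). Rather than multiplying out $r$ factors blindly, I would look for the standard reduction: repeatedly apply the nesting relation to move a single $q_{r-1}$ (or $q_{r-2}$) past the accumulating word, collapsing squares $q_i^2=1$ whenever two equal generators become adjacent. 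This is essentially a normal-form computation, and the expected outcome is that the word telescopes, leaving a short product.

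Next I would expand the right-hand side. From the dual generators, $v_i = t_i t_{i+1}\dotsb t_{r-1}$ and $w_i = v_{r-1}v_{r-2}\dotsb v_{r-i}$, so $w_{r-1} = v_{r-1}v_{r-2}\dotsb v_1$, and then multiplying by $q_{r-1}$ gives the target word. Using the formula expressing each $t_i$ in terms of the $q_j$ (the three-case formula in the excerpt: $t_1=q_1$, $t_2 = q_1q_2q_1$, and $t_i = q_{i-1}q_iq_{i-1}q_{i-2}$ for $i>2$), I can rewrite $w_{r-1}q_{r-1}$ purely in the $q$-generators as well. The goal is then to show the two resulting $q$-words are equal via the cactus relations alone.

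The main obstacle will be the bookkeeping in reconciling the two $q$-word expansions: the left side is a short highly-symmetric power, while the right side is a long alternating product of the $t_i$, and matching them requires either a clever induction on $r$ or an appeal to a known normal form. I would therefore try to reduce the labour by an inductive argument: assume $p_{r-2}^{r-1} = w_{r-2}\,q_{r-2}$ in $\fC_{r-1}$, embed $\fC_{r-1}\hookrightarrow\fC_r$, and analyse the difference $p_{r-1}^r$ versus the image of $p_{r-2}^{r-1}$ by isolating the factors involving the new index $r$. Since both sides descend to the \emph{same} element of $\fS_r$ under the homomorphism (both sides realise the long cycle raised to $r$, i.e.\ the relevant fixed-point data is known), the homomorphism gives a strong consistency check and often pins down exactly which cactus relations must be invoked, turning the identity into a finite, mechanically-checkable verification in the $q$-generators.
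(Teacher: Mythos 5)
Your strategy---verifying $p_{r-1}^r=w_{r-1}\,q_{r-1}$ directly from the presentation of $\fC_r$ by rewriting both sides as words in the $q_i$ and reducing with the nesting relation---is genuinely different from the paper, which gives no proof at all: it cites \cite[Proposition~1.4]{Kirillov1995} and remarks that the identity is clear from the growth diagram. A completed version of your computation would actually strengthen the paper, because the lemma is later applied to the Hecke-algebra representation of $\fC_r$, so the identity is needed in $\fC_r$ itself and not merely for the Bender--Knuth-type operators on tableaux, which is what the cited reference literally establishes. (Be warned that if you try to carry this out you will hit an off-by-one inconsistency in the paper's conventions: as literally written, $t_1=q_1=s_{1,1}$ is trivial yet is asserted to map to the transposition $(1,2)$; the computation only goes through after fixing the indexing so that $p_{r-1}$ really does map to the long cycle.)

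The difficulty is that, as written, your proposal has a gap exactly where the content of the lemma lies. You predict that the left side ``telescopes'' and that the two $q$-words can be matched by ``a clever induction or an appeal to a known normal form'', but you exhibit neither the telescoping nor the induction step. The collapse is not automatic: with $p_{r-1}=q_{r-2}q_{r-1}$ the nesting relation gives $p_{r-1}^2=q_{r-2}\,s_{p,q}$ for an interval reversal $s_{p,q}$ that is not of the form $s_{1,j}$, and the intervals in this two-letter word overlap without containment, so no defining relation applies and the higher powers do not reduce further on their own; meanwhile $w_{r-1}q_{r-1}$ expands to a word of length quadratic in $r$ in the $t_i$. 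The route is feasible---for $r=4$ one can reduce both sides by hand to the same four-letter word in the interval reversals---but the general inductive step \emph{is} the theorem, and it is missing. Moreover, the consistency check via $\fC_r\to\fS_r$ is much weaker than you suggest: the kernel is infinite already for $r=3$, where $\fC_3\cong\bZ/2*\bZ/2$ is infinite dihedral, so equality of images in $\fS_r$ neither pins down which cactus relations to invoke nor turns the identity into a finite mechanical verification. To close the gap, either carry out the induction on $r$ explicitly, or fall back on the argument the paper alludes to: $r$ successive promotions traverse the growth diagram once around, which coincides with the composite of evacuation and dual evacuation, i.e.\ with $w_{r-1}\,q_{r-1}$.
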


For a standard tableau, $T$, let $s_iT$ be the
tableau obtained from $T$ by interchanging $i$ and $i+1$.
Define an involution $t_i$ on standard tableaux by
\begin{equation*}
	t_i(T) = \begin{cases}
		s_iT & \text{if $s_iT$ is standard} \\
		T & \text{otherwise}
	\end{cases}
\end{equation*}

Then these involutions generate an action of the cactus group $\fC_r$
on standard tableaux of size $r$. The action of the element $p_{r-1}\in\fC_r$
is the same as jeu-de-taquin promotion.

\begin{prop}\label{prop:order} The operator $p_{r-1}$ acting on rectangular tableaux of size $r$
	satisfies
	$$p_{r-1}^r=1$$
\end{prop}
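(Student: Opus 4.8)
The plan is to deduce Proposition~\ref{prop:order} from the cactus-group identity in Lemma~\ref{lem:cyclic}. Since $p_{r-1}$, $w_{r-1}$ and $q_{r-1}$ all act on the set of standard tableaux of shape $\lambda$ through the $\fC_r$-action generated by the involutions $t_i$, the relation $p_{r-1}^r = w_{r-1}\,q_{r-1}$ descends to an identity of operators on this set. Thus proving $p_{r-1}^r = 1$ on rectangular tableaux is equivalent to proving that $w_{r-1}\,q_{r-1}$ acts as the identity there. As $q_{r-1} = s_{1,r-1}$ is an involution, this reduces to the single statement that $w_{r-1}$ and $q_{r-1}$ induce the same operator on standard tableaux of rectangular shape.

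First I would identify the two operators combinatorially. The generator $q_{r-1}=s_{1,r-1}$ acts as the Schützenberger evacuation applied to the subtableau formed by the entries $1,\dots,r-1$, whereas the dual generator $w_{r-1}$ acts as the associated dual evacuation; this matches the images in $\fS_r$ recorded above, where $q_{r-1}$ reverses $\{1,\dots,r-1\}$ and fixes $r$. For a rectangular shape the entry $r$ of any standard tableau occupies the unique outer corner, so deleting it leaves a tableau to which these evacuations apply in a canonical way. This is the first point at which rectangularity enters, since for a non-rectangular shape the cell holding $r$ is not determined.

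The heart of the argument is the symmetry special to rectangles: the operation that rotates a tableau by $180^\circ$ and replaces each entry $k$ by $r+1-k$ sends standard tableaux of rectangular shape $\lambda$ to standard tableaux of the same shape $\lambda$. For a non-rectangular shape the rotated diagram is the complementary shape, so this symmetry is lost. I would use this involution to prove \emph{directly}, without reference to the order of promotion, that evacuation and dual evacuation coincide on rectangular tableaux, and hence that $w_{r-1}=q_{r-1}$ as operators. Granting this, $w_{r-1}\,q_{r-1}=q_{r-1}^2=1$, and Lemma~\ref{lem:cyclic} yields $p_{r-1}^r=1$.

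The main obstacle I expect is precisely this identification step: matching the abstract cactus generators $q_{r-1}$ and $w_{r-1}$ to evacuation and dual evacuation on tableaux, and then checking that the $180^\circ$-rotation symmetry intertwines the two in the correct direction so as to equalize them. Care is needed in bookkeeping the fixed corner entry $r$ and in ensuring that the proof of $w_{r-1}=q_{r-1}$ is genuinely independent of the conclusion $p_{r-1}^r=1$, rather than circular. Once this is arranged, the collapse $w_{r-1}=q_{r-1}$ for rectangles, and therefore the order statement, follows.
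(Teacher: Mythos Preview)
Your proposal is correct and follows essentially the same route as the paper: reduce via Lemma~\ref{lem:cyclic} to showing $w_{r-1}=q_{r-1}$ on rectangular tableaux, and establish that equality using the reverse-complement (the $180^\circ$ rotation combined with $k\mapsto r+1-k$). The paper phrases the last step slightly more abstractly---conjugation by reverse-complement interchanges the two cactus actions, sending $q_{r-1}$ to $w_{r-1}$, while $q_{r-1}$ itself commutes with reverse-complement---whereas you name the operators as evacuation and dual evacuation; but the underlying argument is the same.
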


\begin{proof}
	By Lemma~\ref{lem:cyclic} it is sufficient to show $w_{n-1} = q_{n-1}$.
	This follows from two observations on the reverse-complement.
	Recall that reverse-complement is the involution given by rotating a rectangular shape
	standard tableaux through a half-turn and reversing the numbering.
	
	The first observation is that conjugating by reverse-complement interchanges the two actions of the cactus groups. In particular, $w_{r-1}$ is the conjugate of $q_{n-1}$.
	The second observation is that $q_{r-1}$ commutes with reverse-complement.
\end{proof}

\section{Hecke algebras}\label{sec:hecke}
Let $\bQ(q)$ be the field of rational functions in an indeterminate $q$.
The quantum integers $[n]\in \bQ(q)$ are defined by
\begin{equation*}
[n] = \frac{q^n-q^{-n}}{q-q^{-1}}
\end{equation*}

%\subsection{Seminormal representation}

\begin{defn}
	The Hecke algebra $H_r(q)$ is generated by $u_i$ for $1\le i\le r-1$
	and the defining relations are
	\begin{align*}
	u_i^2 &= -[2]\, u_i \\
	u_i\,u_{i+1}\,u_i-u_i&=u_{i+1}\,u_i\,u_{i+1}-u_{i+1} \\
	u_i\,u_j&=u_j\,u_i\qquad\text{for $|i-j|>1$}
	\end{align*}
\end{defn}

Let $\sigma_i$ be the standard generators of the braid group, $B_r$.
Then we have a homomorphism $B_r\to H_r(q)$ given by
\begin{equation*}
	\sigma_i^{\pm 1} \mapsto q^{\pm1}  + u_i
\end{equation*}
The image of $\sigma_i$ then satisfies
\begin{equation*}
	\sigma_i-\sigma_i^{-1} = q - q^{-1}
\end{equation*}
%The algebra homomorphisms $\bQ(q)\,\fB_r\to H_r(q)$ are given by
%\begin{equation*}
%t_i^{\pm 1}\mapsto q^{\pm 1} + u_i	
%\end{equation*}
Young's seminormal forms are representations of the symmetric groups.
These were introduced in \cite[Theorem~IV]{Young1932}. Here we give
the analogous construction for the Hecke algebras.

Fix a shape $\lambda$ of size $r$. Then we construct a 
representation of $H_r(q)$ on the vector space with basis the set
of standard tableaux of shape $\lambda$.

The \Dfn{content vector} of a standard tableau $T$ of size $n$ is a function
$\ct_T\colon [1,2,\dotsc ,n]\to\bZ$. The entry
$\ct_T(k)\in\bZ$ is given by $\ct_T(k)=j-i$ if $k$ is in box $(i,j)$ in $T$.
The content vector of $T$ determines $T$. The \Dfn{axial distance}
is $a_T(i) = \ct_{T}(i+1)-\ct_{T}(i)$.

Define a linear operator $\overline{s}_i$ by
\begin{equation*}
	\overline{s}_i(T) = \begin{cases}
		s_iT & \text{if $s_iT$ is standard} \\
		0 & \text{otherwise}
	\end{cases}
\end{equation*}

The following is \cite[Theorem~3.22]{Ram2003}. The seminormal representations of
$H_r(q)$ are defined by giving the matrices representing the generators.
\begin{prop}
	The action of $u_i$ is given by
	\begin{equation*} u_i\:T = 
	\begin{cases}
	-\frac{[a-1]}{[a]}\:T + \frac{[a-1]}{[a]}\:\overline{s}_iT &\text{if $\ct_T(i+1)>\ct_T(i)$}\\
	-\frac{[a-1]}{[a]}\:T + \frac{[a+1]}{[a]}\:\overline{s}_iT &\text{if $\ct_T(i)>\ct_T(i+1)$}	
	\end{cases}
	\end{equation*}
where $a$ is the axial distance $a_T(i)$.
\end{prop}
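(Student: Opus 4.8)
The plan is to prove the statement the way Young's original argument is adapted to the Hecke algebra: verify that the operators $u_1,\dots,u_{r-1}$ defined by the displayed formulas satisfy the three defining relations of $H_r(q)$, since giving such operators on the span of the standard tableaux of shape $\lambda$ is the same as giving an $H_r(q)$-module. Each $\overline{s}_iT$ is either a standard tableau or $0$, so the operators are automatically well defined on the basis. Throughout write $a=a_T(i)$, and recall that $s_iT$ fails to be standard precisely when $i,i+1$ lie in a common row ($a=1$) or a common column ($a=-1$), while otherwise $|a|\ge 2$, $\overline{s}_iT=s_iT$ is standard, and its axial distance at position $i$ is $-a$. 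The only facts about quantum integers I expect to need are the three-term recurrence $[a-1]+[a+1]=[2][a]$, the relation $[a]^2-[a-1][a+1]=1$, and $[-n]=-[n]$.

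The commutation relation $u_iu_j=u_ju_i$ for $|i-j|>1$ is immediate: the coefficients in $u_iT$ depend only on $\ct_T(i),\ct_T(i+1)$ and $u_i$ only rearranges the entries $i,i+1$, whereas $u_j$ reads and moves the disjoint pair $j,j+1$; neither operator disturbs the data the other uses, so they commute on every basis vector. The quadratic relation $u_i^2=-[2]u_i$ is checked on the span of $T$ and $s_iT$. The degenerate cases are transparent: when $a=1$ the coefficient $[a-1]=[0]$ kills $u_iT$, and when $a=-1$ one has $\overline{s}_iT=0$ and $u_iT=-[2]\,T$, so $u_i$ acts by $0$ or by $-[2]$, each satisfying the relation. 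When $|a|\ge 2$ the plane spanned by $T$ and $T'=s_iT$ is $u_i$-invariant, and I would check that the $2\times 2$ matrix $M$ of $u_i$ on it satisfies $M^2=-[2]M$, equivalently $\tr(M)=-[2]$ and $\det(M)=0$. Using that $\ct_{T'}$ is obtained from $\ct_T$ by exchanging the values at $i$ and $i+1$ (hence axial distance $-a$) together with $[-n]=-[n]$, the trace condition is exactly $[a-1]+[a+1]=[2][a]$ and the determinant condition reduces to $[a]^2-[a-1][a+1]=1$.

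The substantive step, and the one I expect to be the main obstacle, is the braid relation $u_iu_{i+1}u_i-u_i=u_{i+1}u_iu_{i+1}-u_{i+1}$. It couples the three entries $i,i+1,i+2$, so I would verify it on the subspace spanned by the orbit of $T$ under $\overline{s}_i,\overline{s}_{i+1}$, namely the standard tableaux agreeing with $T$ outside those three boxes; this subspace has dimension $1$, $2$, $3$, or $6$ according to the poset formed by the three boxes under the row-and-column order. Every coefficient that arises depends only on the two axial distances $a_T(i),a_T(i+1)$, equivalently on the contents $\ct_T(i),\ct_T(i+1),\ct_T(i+2)$, so in each configuration the relation collapses to a finite matrix identity in rational functions of $[a_T(i)]$, $[a_T(i+1)]$ and $[a_T(i)+a_T(i+1)]$. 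I would organise the calculation by configuration --- three collinear boxes, the two hook-type cases admitting a single swap, and the generic pairwise-incomparable case giving the regular representation of $\fS_3$ --- and reduce each to the two quantum-integer identities above. The difficulty is not any individual identity but the combined bookkeeping: tracking which of the tableaux in the orbit are standard, hence which $\overline{s}$ terms survive, in the degenerate arrangements, and confirming that the non-symmetric off-diagonal coefficients of the two cases have the product dictated by $[a]^2-[a-1][a+1]=1$, so that the two sides of the braid relation agree term by term.
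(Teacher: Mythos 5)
The paper does not actually prove this statement --- it is quoted as \cite[Theorem~3.22]{Ram2003} --- so your plan of verifying the defining relations directly is a genuinely different, more self-contained route, and it is the standard way such seminormal forms are established. Your treatment of the commutation relation and of the degenerate cases $a=\pm1$ is fine. But there is a concrete gap in the one computation you claim to have completed, the quadratic relation on the $2\times2$ block. Under your own reading ($a=a_T(i)$ signed, so that $T'=s_iT$ has axial distance $-a$), the coefficient of $T'$ in $u_iT$ is $\frac{[a-1]}{[a]}$ (first case, $a>0$) and the coefficient of $T$ in $u_iT'$ is $\frac{[-a+1]}{[-a]}=\frac{[a-1]}{[a]}$ (second case applied to $T'$). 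The product of the off-diagonal entries is therefore $\frac{[a-1]^2}{[a]^2}$, while the product of the diagonal entries is $\frac{[a-1][a+1]}{[a]^2}$, so $\det M=\frac{[a-1]\left([a+1]-[a-1]\right)}{[a]^2}\neq0$ and $M^2=-[2]M$ fails. Your assertion that ``the determinant condition reduces to $[a]^2-[a-1][a+1]=1$'' is not correct: that identity is what gives $\det\sigma_i=-1$, whereas $\det u_i=0$ requires no identity at all --- it requires the off-diagonal product to equal the diagonal product, i.e.\ the two off-diagonal coefficients must be $\frac{[b-1]}{[b]}$ and $\frac{[b+1]}{[b]}$ with $b=|a_T(i)|$. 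That is exactly what the displayed $2\times2$ matrix following the proposition has, and it is the reading (with $a$ the unsigned axial distance in both cases) under which the statement is true. A verification-of-relations proof must pin this convention down first; as written, your check would either fail or silently switch conventions.

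Beyond that, the braid relation --- which you correctly identify as the substantive step and which carries essentially all the content of Ram's theorem --- is only planned, not executed. The orbit-by-orbit organisation you describe (configurations of the boxes containing $i,i+1,i+2$, with the surviving $\overline{s}$ terms tracked in the degenerate arrangements) is the right skeleton, and the identities $[a-1]+[a+1]=[2][a]$ and $[a]^2-[a-1][a+1]=1$ together with $[b][c]-[b-1][c-1]=[b+c-1]$-type expansions do suffice; but until the $3$- and $6$-dimensional cases are actually written out (with the corrected off-diagonal coefficients), this is an outline rather than a proof. Finally, note that verifying the relations shows only that the formulas define \emph{some} $H_r(q)$-module; to match the proposition's phrasing (``the action of $u_i$ is given by'') one should either add a word identifying it with $U_\lambda$ (e.g.\ by specialising at $q=1$ to Young's seminormal form, or by a dimension count over all $\lambda\vdash r$), or observe, as the paper does, that the statement is being used as the definition of the seminormal representation.
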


Assume $T$ and $s_iT$ are standard and $a_T(i)>1$. Then on the subspace
with ordered basis $(T,s_iT)$.
\begin{equation*}
u_i =	\left[ \begin {array}{cc} -\frac {[a-1]}{ [a]  }& \frac{[a-1]}{[a]}
\\ \noalign{\medskip}{\frac {[a+1]  }{
		[a]}}&-\frac {[a+1]}{ [a]  }\end {array} \right]
	\qquad
\sigma_i=\left[ \begin {array}{cc} \frac {q^a}{ [a]  }&\frac{[a-1]}{[a]}\:
\\ \noalign{\medskip}{\frac {[a+1]  }{
		[a]}}&\frac {-q^{-a}}{ [a]  }\end {array} \right]
\end{equation*}

The action of $\fC_r$ is defined by giving the matrices representing the generators
$\{t_i\}$. The following is \cite[Theorem~4.8]{Westbury2018}.
\begin{thm} The action of $t^{(q)}_i$ is given by
	\begin{equation*} t^{(q)}_i (T) = 
	\begin{cases}
	-\frac{1}{[a]}\:T + \frac{[a-1]}{[a]}\:s_iT &\text{if $\ct_T(i+1)>\ct_T(i)$}\\
	-\frac{1}{[a]}\:T + \frac{[a+1]}{[a]}\:s_iT &\text{if $\ct_T(i)>\ct_T(i+1)$}	
	\end{cases}
	\end{equation*}
\end{thm}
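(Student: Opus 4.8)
The plan is to read off the matrix of $t^{(q)}_i$ from its definition as the image of the cactus generator $s_{i,i+1}$ under the homomorphism $\fC_r\to H_r$. Since this homomorphism arises from the coboundary structure on the type~I representations of $U_q(\fsl_n)$ and is transported to $H_r$ by quantum Schur--Weyl duality, the generator $s_{i,i+1}$ is represented by the commutor of the two tensor factors sitting in positions $i$ and $i+1$, and it acts as the identity on the remaining factors. Because it touches only the factors in positions $i$ and $i+1$, it preserves---just as $u_i$ and $\sigma_i$ do---the subspaces spanned by standard tableaux that agree outside the entries $i$ and $i+1$: the two-dimensional blocks with ordered basis $(T,s_iT)$ when $s_iT$ is standard, and one-dimensional blocks otherwise. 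It therefore suffices to compute $t^{(q)}_i$ on a single such block, exactly as the seminormal matrices for $u_i$ and $\sigma_i$ recorded above were obtained.

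On a two-dimensional block the only invariant is the axial distance $a=a_T(i)$, and the computation is governed by the rank-one situation: the strands in positions $i,i+1$ behave as a pair of $U_q(\fsl_2)$-modules of relative content $a$. I would begin from the braiding matrix $\sigma_i$ displayed above and apply Drinfeld's construction of the commutor, which produces the coboundary datum from the braiding $\check{R}$ together with the balancing (the ribbon twist $\theta$) and the Cartan involution. On the block the twist acts by content-dependent scalars, powers of $q^{\pm a}$, so the commutor is again an explicit $2\times 2$ matrix: its off-diagonal entries are those of $\sigma_i$, up to the transposition coming from the swap of the two factors, while the diagonal is modified by the twist. The final simplification uses the quantum identity $[a-1]\,[a+1]=[a]^2-1$, which should collapse the diagonal to $\pm 1/[a]$ and leave the off-diagonal entries $[a\pm 1]/[a]$ appearing in the stated formula.

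Two verifications then complete the argument. First, the degenerate blocks: when $i$ and $i+1$ occupy the same row or the same column one has $a=\pm 1$, whence $[a\mp1]=0$ and the formula reduces to multiplication by $\pm 1$; this must match the value of the commutor on the corresponding symmetric or antisymmetric one-dimensional block. Second, I would check that the resulting matrix satisfies the relations forced by the cactus group---in particular that it is an involution, since $s_{i,i+1}^2=1$---and that its specialisation at $q=1$ returns Young's seminormal matrix for the transposition $(i,i+1)$, in accordance with the coboundary structure degenerating to the symmetric braiding at $q=1$.

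The step I expect to be the main obstacle is fixing the normalisation in Drinfeld's construction. The commutor is not a function of the braiding $\sigma_i$ alone: its eigenvectors differ from those of $\sigma_i$, so the passage from the diagonal $q^{\pm a}/[a]$ of $\sigma_i$ to the diagonal $\pm 1/[a]$ of $t^{(q)}_i$ is not a mere rescaling of the $\sigma_i$-eigenspaces but genuinely involves the ribbon twist and the Cartan involution. Tracking how these content-dependent factors act on the symmetric and antisymmetric parts of each block, so as to land on exactly the entries in the statement with the correct signs, is where the coboundary structure, rather than the braided structure alone, enters in an essential way.
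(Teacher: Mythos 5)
The paper does not actually prove this statement; it quotes it from \cite[Theorem~4.8]{Westbury2018}. Judged on its own terms, your proposal founders on the identification of the group element being represented. The generator $t_i$ is defined as $p_i\,p_{i-1}^{-1}$, a specific word in the reversal generators $q_j=s_{1,j}$ (explicitly $t_i=q_{i-1}q_iq_{i-1}q_{i-2}$ for $i>2$); its image in $\fS_r$ is the simple transposition, but in $\fC_r$ itself $t_i\neq s_{i,i+1}$, because the cactus group satisfies no braid relations. Already in $\fC_3$, which is the infinite dihedral group on the involutions $s_{1,2},s_{1,3}$, one has $t_3=s_{1,2}s_{1,3}s_{1,2}\neq s_{1,3}s_{1,2}s_{1,3}=s_{2,3}$. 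So the operator you set out to compute --- the commutor of the two tensor factors in positions $i,i+1$, which represents $s_{i,i+1}$ --- is not $t^{(q)}_i$.

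Moreover, the computation you outline cannot produce the stated matrix. The double braiding acts by a scalar on each irreducible summand of $V\otimes V$ (there are only two, the $q$-symmetric and $q$-antisymmetric squares), so its square root, however the ribbon twist normalises it, is a function of $\sigma_i$; hence Drinfeld's commutor commutes with $\sigma_i$ and shares its eigenvectors. On a two-dimensional block it is the involution fixing the $q$-eigenvector $(1,1)$ of $\sigma_i$ and negating the $(-q^{-1})$-eigenvector $([a-1],-[a+1])$, namely
\begin{equation*}
\frac{1}{[2][a]}\begin{pmatrix} q^a+q^{-a} & 2[a-1] \\ 2[a+1] & -q^a-q^{-a}\end{pmatrix},
\end{equation*}
whereas the stated $t^{(q)}_i$ does not commute with $\sigma_i$: its $+1$-eigenvector is $([a-1],[a]-1)$, not $(1,1)$. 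Note that your proposed sanity checks would not catch this: both matrices are involutions, both act by $\pm1$ on the degenerate blocks $a=\pm1$, and both specialise at $q=1$ to Young's seminormal matrix for $(i,i+1)$, since at $q=1$ the double braiding degenerates to the identity. A correct proof must work with the non-local generators $q_j$ (the partial Sch\"utzenberger reversals acting on the first $j$ factors) and establish the locality of $t^{(q)}_i$ and its dependence on the axial distance alone as a theorem; that is the content of the cited result.
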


Assume $T$ and $s_iT$ are standard and $a_T(i)>1$. Then on the subspace
with ordered basis $(T,s_iT)$.
\begin{equation*}
t^{(q)}_i=\left[ \begin {array}{cc} \frac {1}{ [a]  }&\frac{[a-1]}{[a]}
\\ \noalign{\medskip}{\frac {[a+1]  }{
		[a]}}&-\frac {1}{ [a]  }\end {array} \right]
\end{equation*}

Let $p^{(q)}_{r-1}$ be the matrix representing $p_{r-1}\in\fC_r$. Then we prove
Theorem~\ref{thm:main} by showing that a modification of $p^{(q)}_{r-1}$
is an interpolating matrix.

\begin{prop} The operator $p^{(q)}_{r-1}$ acting on rectangular tableaux of size $r$
	satisfies
	$$(p^{(q)}_{r-1})^r=1$$
\end{prop}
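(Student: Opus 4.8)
The plan is to reduce the quantum statement directly to its classical counterpart, Proposition~\ref{prop:order}, by exploiting the fact that both are statements about the same group element $p_{r-1}\in\fC_r$ acting through different representations of the cactus group. The key observation is that $(p^{(q)}_{r-1})^r=1$ is the image in the seminormal representation $U_\lambda$ of the Hecke algebra of the relation we wish to establish in $\fC_r$, while Proposition~\ref{prop:order} establishes the analogous relation for the combinatorial action on standard tableaux. So the natural strategy is to argue that the required relation already holds at the level of the group $\fC_r$ (or of a suitable quotient through which both actions factor), so that any representation automatically inherits it.

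Concretely, I would first invoke Lemma~\ref{lem:cyclic}, which gives $p_{r-1}^r=w_{r-1}\,q_{r-1}$ as an identity in $\fC_r$ itself. Thus, in any $\fC_r$-module, proving $(p_{r-1})^r=1$ amounts to showing that the generator $w_{r-1}$ and the generator $q_{r-1}$ act by the same operator. For the combinatorial action this was precisely the content of the proof of Proposition~\ref{prop:order}, carried out via the reverse-complement involution. I would then carry the same two observations into the seminormal setting: first, that conjugation by the (quantum analogue of the) reverse-complement interchanges the two systems of generators, so that $w^{(q)}_{r-1}$ is the conjugate of $q^{(q)}_{r-1}$; and second, that $q^{(q)}_{r-1}$ commutes with reverse-complement on rectangular shapes. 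Together these give $w^{(q)}_{r-1}=q^{(q)}_{r-1}$, whence $(p^{(q)}_{r-1})^r = w^{(q)}_{r-1}\,q^{(q)}_{r-1} = (q^{(q)}_{r-1})^2 = 1$, the last equality because $q_{r-1}=s_{1,r-1}$ is an involution in $\fC_r$ and hence acts as an involution in every representation.

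The main obstacle, and the point requiring genuine care, is justifying that the reverse-complement symmetry persists in the seminormal (quantum) picture with exactly the same formal consequences it has combinatorially. In the classical case reverse-complement is a manifestly combinatorial bijection of standard tableaux whose interaction with the cactus generators is transparent from the growth diagram. In the seminormal basis the generators $t^{(q)}_i$ act by nontrivial matrices with coefficients in $\bQ(q)$ (depending on axial distances through the entries $1/[a]$, $[a\pm1]/[a]$), so I must verify that the reverse-complement induces a well-defined linear isomorphism of $U_\lambda$ intertwining the two generator systems, and that it commutes with $q^{(q)}_{r-1}$. The cleanest route is to track how reverse-complement transforms content vectors and axial distances: reversing the numbering sends $k\mapsto r+1-k$ and the half-turn negates contents, so $a_T(i)\mapsto a_{T'}(r-i)$, and one checks that the matrix entries above are invariant under the simultaneous substitution this induces. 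Once this compatibility of the quantum generator matrices with the reverse-complement is established, the two observations transfer verbatim and the relation follows purely formally from Lemma~\ref{lem:cyclic} as in Proposition~\ref{prop:order}.
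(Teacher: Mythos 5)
Your proposal is correct and follows essentially the same route as the paper, which simply declares the proof to be identical to that of Proposition~\ref{prop:order}: apply Lemma~\ref{lem:cyclic} and then establish $w^{(q)}_{r-1}=q^{(q)}_{r-1}$ via the two reverse-complement observations. Your additional care in verifying that reverse-complement genuinely intertwines the seminormal generator matrices (by tracking contents and axial distances) fills in a detail the paper leaves implicit, but it is the same argument.
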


\begin{proof} The proof is the same as the proof of Proposition~\ref{prop:order}.
\end{proof}

\begin{lemma} The evaluation of $p^{(q)}_{r-1}$ at $q=1$ is defined and gives the action of the long cycle. 
\end{lemma}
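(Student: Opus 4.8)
The plan is to reduce the statement to the behaviour of the individual generator matrices $t^{(q)}_i$ at $q=1$ and then to identify their specialisations with Young's seminormal form for the transpositions. First I would use the defining relation $p_i=t_i\,t_{i-1}\dotsb t_1$ of the generators $t_i$ to write $p_{r-1}=t_{r-1}\,t_{r-2}\dotsb t_1$ in $\fC_r$. Since the action of $\fC_r$ on $U_\lambda$ is a homomorphism, this yields the matrix factorisation $p^{(q)}_{r-1}=t^{(q)}_{r-1}\,t^{(q)}_{r-2}\dotsb t^{(q)}_1$, so it suffices to control each factor at $q=1$ and then multiply.

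Next I would check that the evaluation at $q=1$ is defined. Every entry of $t^{(q)}_i$ is a $\bQ$-linear combination of $1/[a]$, $[a-1]/[a]$ and $[a+1]/[a]$, where $a=\ct_T(i+1)-\ct_T(i)$ is the axial distance. The only possible obstruction is a vanishing denominator, that is $[a]=0$ at $q=1$, which forces $a=0$. But in a standard tableau the consecutive entries $i$ and $i+1$ can never lie on a common diagonal: along any diagonal the entries strictly increase, and successive cells of a diagonal are separated in the tableau order by an adjacent cell carrying a strictly intermediate value, so $|\ct_T(i+1)-\ct_T(i)|\geq 1$. Hence $[a]\big|_{q=1}=a\neq 0$, the specialisation of each $t^{(q)}_i$ is defined, and $p^{(q)}_{r-1}\big|_{q=1}=t^{(1)}_{r-1}\dotsb t^{(1)}_1$.

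The heart of the argument is to identify $t^{(1)}_i$ with the matrix $\rho(s_i)$ of the Coxeter generator $s_i=(i,i+1)$ in Young's seminormal representation $\rho$ of $\fS_r$ on $U_\lambda$. On a two-dimensional block, where $T$ and $s_iT$ are both standard, setting $q=1$ in the displayed matrix for $t^{(q)}_i$ turns $[a]$ and $[a\pm 1]$ into $a$ and $a\pm 1$, and the resulting involution (trace $0$, determinant $-1$) is exactly the seminormal matrix of $s_i$ on $(T,s_iT)$; indeed it agrees verbatim with the $q=1$ specialisation of the braid matrix $\sigma_i$. On the remaining one-dimensional blocks, where $s_iT$ is not standard, I would read off the scalar by which $t^{(1)}_i$ acts directly from the definition and match it against the seminormal convention, namely $+1$ when $i$ and $i+1$ share a row and $-1$ when they share a column. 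Granting this, the $t^{(1)}_i$ satisfy the relations of $\fS_r$ with $t^{(1)}_i=\rho(s_i)$, whence
\[
p^{(q)}_{r-1}\big|_{q=1}=\rho(s_{r-1})\dotsb\rho(s_1)=\rho\bigl(s_{r-1}\dotsb s_1\bigr),
\]
and $s_{r-1}\dotsb s_1$ is precisely the image of $p_{r-1}$ under $\fC_r\to\fS_r$, which is the long cycle. This is the asserted action.

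The step I expect to be delicate is the identification on the one-dimensional blocks. Away from them the specialised matrices visibly coincide with the seminormal ones, but the degenerate cases fix the global normalisation, and it is exactly here that one must confirm the scalars are the seminormal signs rather than their negatives; a sign consistency check (for instance against $(p^{(q)}_{r-1})^r=1$, already established) is the quickest safeguard. Conceptually this is the crucial point: the operators $t^{(1)}_i$ are a priori only cactus-equivariant, and it is special to $q=1$ that they coincide with honest symmetric-group generators, so that the product telescopes to the single element $\rho(\text{long cycle})$ rather than to some other signed variant.
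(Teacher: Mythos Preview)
Your argument is correct and is essentially the same as the paper's: the paper simply asserts that the cactus action on $U_\lambda$ specialises at $q=1$ and then factors through the surjection $\fC_r\to\fS_r$ of \eqref{eq:hom}, whence $p_{r-1}$ acts as the long cycle. You have unpacked exactly this, checking regularity at $q=1$ via the axial-distance argument and verifying the factorisation through $\fS_r$ generator by generator by matching $t^{(1)}_i$ with the seminormal $s_i$; your caution about the one-dimensional blocks is prudent but the scalars do come out as the seminormal $\pm 1$.
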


\begin{proof} The action of $\fC_r$ on $U_\lambda$ can be evaluated at $q=1$.
This action factors through the homomorphism in \eqref{eq:hom} to give the
standard action of $\fS_r$ in the seminormal basis. In particular,
the homomorphism in \eqref{eq:hom} maps $p^{(q)}_{r-1}$ to the long cycle in
$\fS_r$.
\end{proof}

The matrix $t^{(q)}_i$ is not regular at $q=0$ as $\frac{[a+1]}{[a]}$ has a
simple pole at $q=0$. Let $D$ be the diagonal matrix whose diagonal entry
corresponding to $T$ is $q^{\mathrm{inv}(T)}$ where $\mathrm{inv}(T)$ is the
inversion number of $T$. Let $\widehat{t}^{(q)}_i$ be $t^{(q)}_i$ conjugated by $D$.

\begin{lemma}
The matrix $\widehat{t}^{(q)}_i$ is regular at $q=0$ and the evaluation at $q=0$ is the matrix of the involution $t_i$.
\end{lemma}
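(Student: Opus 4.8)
The plan is to exploit the block structure of $t^{(q)}_i$. For a fixed $i$ the relation $T\sim s_iT$ partitions the standard tableaux of shape $\lambda$ into singletons (those $T$ with $s_iT$ not standard) and pairs $\{T,s_iT\}$ (those with $s_iT$ standard), and $t^{(q)}_i$ preserves this decomposition. Since $D$ is diagonal, the conjugate $\widehat{t}^{(q)}_i$ is block diagonal with respect to the same decomposition. On a singleton we have $a=a_T(i)=\pm 1$, the entry $\frac{[a+1]}{[a]}$ does not occur, and the scalar action is unchanged by the diagonal conjugation; this block is therefore already regular at $q=0$ and contributes the corresponding fixed-point entry of $t_i$. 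Thus it suffices to treat a $2\times 2$ block, where I may assume $a=a_T(i)\geq 2$ and use the matrix displayed above.

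First I would record the behaviour of the quantum integers at $q=0$. Writing $[n]=q^{1-n}(1+q^2+\dots+q^{2n-2})$ exhibits the leading term $q^{1-n}$ with coefficient $1$, so each relevant ratio equals its leading power times $(1+O(q^2))$. Hence, as $q\to 0$, one has $\frac{1}{[a]}\sim q^{a-1}\to 0$ and $\frac{[a-1]}{[a]}\sim q\to 0$, while $\frac{[a+1]}{[a]}\sim q^{-1}$ is a simple pole with leading coefficient $1$. Within the $2\times 2$ block the two diagonal entries therefore already vanish at $q=0$, and the unique obstruction to regularity is this single simple pole.

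Next I would compute the effect of the conjugation. Conjugating by $D$ multiplies the two off-diagonal entries of the block by $q^{\pm(\mathrm{inv}(T)-\mathrm{inv}(s_iT))}$ and fixes the diagonal entries. Both the regularity and the $q=0$ value then reduce to the single combinatorial assertion that $\mathrm{inv}(T)-\mathrm{inv}(s_iT)=\pm 1$, with the sign correlated to the comparison of $\ct_T(i)$ and $\ct_T(i+1)$ in exactly the way that cancels the pole. Choosing the orientation of the conjugation accordingly, the entry carrying the pole becomes $q^{-1}\cdot q\,(1+O(q^2))\to 1$ and the other off-diagonal entry becomes $q\cdot q^{-1}(1+O(q^2))\to 1$, while the diagonal entries still tend to $0$. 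The block therefore converges to the antidiagonal matrix, which is the matrix of $t_i$ interchanging $T$ and $s_iT$.

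The substance of the argument, and the step I expect to be the main obstacle, is this combinatorial identity. I would take $\mathrm{inv}(T)$ to be the number of inversions of the reading word of $T$; interchanging the values $i$ and $i+1$ affects only the relative order of these two values, so $\mathrm{inv}$ changes by exactly $\pm 1$, the sign depending on whether $i$ precedes or follows $i+1$ in the reading word. It then remains to match this with the content comparison, namely to show that $i$ precedes $i+1$ if and only if $\ct_T(i+1)>\ct_T(i)$. The key geometric point is that, when $T$ and $s_iT$ are both standard, the cells of $i$ and $i+1$ lie in distinct rows and distinct columns, and in fact $i+1$ lies either strictly northeast or strictly southwest of $i$: a strictly northwest position is excluded because $T$ increases along rows and down columns, and a strictly southeast position is excluded because the cell in the row of $i$ and the column of $i+1$ would be forced to carry an entry both $\geq i+1$ and $\leq i$. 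In the northeast case both $\ct_T(i+1)>\ct_T(i)$ and ``$i$ precedes $i+1$'' hold, and in the southwest case both fail, which yields the equivalence and fixes the sign. Assembling the blocks then shows that $\widehat{t}^{(q)}_i$ is regular at $q=0$ with value the matrix of the involution $t_i$.
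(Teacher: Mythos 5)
The paper's own proof of this lemma is the single phrase ``by inspection,'' so your proposal is not so much an alternative route as the missing inspection itself --- and it is essentially the right one. The block decomposition into fixed tableaux and pairs $\{T,s_iT\}$, the expansion $[n]=q^{1-n}(1+q^2+\dots+q^{2n-2})$ showing that $\frac1{[a]}$ and $\frac{[a-1]}{[a]}$ vanish at $q=0$ while $\frac{[a+1]}{[a]}$ has a simple pole with leading coefficient $1$, and the reduction of the whole lemma to the identity $\mathrm{inv}(T)-\mathrm{inv}(s_iT)=\pm1$ with sign governed by the comparison of $\ct_T(i)$ and $\ct_T(i+1)$ --- this is exactly the content the paper suppresses, and your geometric argument (that when $T$ and $s_iT$ are both standard the cell of $i+1$ sits strictly northeast or strictly southwest of the cell of $i$, the other two quadrants being excluded by standardness) correctly establishes that identity and pins down the sign.

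Two points deserve tightening. First, the phrase ``choosing the orientation of the conjugation accordingly'' is doing real work: if the sign of $\mathrm{inv}(T)-\mathrm{inv}(s_iT)$ were correlated the other way, conjugation by $D$ would worsen the pole to $q^{-2}$ rather than cancel it, so you must actually fix a convention for $\mathrm{inv}$ and for $D(\cdot)D^{-1}$ versus $D^{-1}(\cdot)D$ and verify the match, not merely observe that some choice works. Second, and more substantively, your claim that a singleton block ``contributes the corresponding fixed-point entry of $t_i$'' needs checking: for a fixed tableau one has $a_T(i)=1$ (same row) or $a_T(i)=-1$ (same column), and the coefficient of $T$ in the stated formula for $t^{(q)}_i$ is $-\tfrac1{[a]}=\mp1$, so taking the paper's displayed formula literally one of the two singleton types acts by $-1$, not $+1$. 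This is at least partly an inconsistency in the paper's own conventions (the displayed $2\times2$ matrix for $t^{(q)}_i$ does not match the sign pattern of the preceding formula), but since the lemma asserts equality with the honest permutation matrix of the involution $t_i$, the $+1$ on fixed points is part of what must be verified, and your proof currently asserts it without computing the scalar.
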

\begin{proof} By inspection.
\end{proof}

For any $\lambda\vdash r$, the matrices $\widehat{t}^{(q)}_i$ generate an action of $\fC_r$ and the matrix
of any element is regular at $q=0$. Let $\widehat{p}^{(q)}_{r-1}$ be the matrix
of $p_{r-1}\in\fC_r$. Then we have shown that if $\lambda$ has rectangular shape then $\widehat{p}^{(q)}_{r-1}$
is an interpolating matrix and so have proved Theorem~\ref{thm:main}.

\section{Conclusion}
In conclusion we put this result in the context of the representation theory
of quantum groups and discuss the generalisation of the statement and method of proof of Theorem~\ref{thm:main} in this context.

Let $\sC$ be a finite type Cartan matrix and $U_q(\sC)$ the associated
quantised enveloping algebra. Let $\varpi$ be a dominant weight in the
weight lattice of $\sC$. Let $V_q(\varpi)$ be the type I highest weight
representation of $U_q(\sC)$ and let $C(\varpi)$ be the crystal of $V_q(\varpi)$.

The cactus group $\fC_r$ acts on the crystal $\otimes^r C(\varpi)$.
This implies that $\fC_r$ acts on the set of highest weight words
in $\otimes^r C(\varpi)$, preserving the weight. 
In particular $\fC_r$ acts on $\Inv_r(C(\varpi))$, the set of highest weight
words of weight 0. Let $c_0$ be the permutation matrix of the action of
$p_{r-1}\in\fC_r$.

Let $L(\sC)$ be the semisimple Lie algebra of $\sC$. Let $V(\varpi)$ be the
highest weight representation of $L(\sC)$. The symmetric group $\fS_r$
acts on the representation $\otimes^r V(\varpi)$. 
This implies that $\fS_r$ acts on the space of highest weight tensors
in $\otimes^r V(\varpi)$, preserving the weight. 
In particular $\fS_r$ acts on $\Inv_r(V(\varpi))$, the space of invariant tensors.
Let $c_1$ be the matrix representing the long cycle with respect to a
chosen basis.

The generalisation of Theorem~\ref{thm:main} is that the matrices
$c_0$ and $c_1$ are conjugate. Theorem~\ref{thm:main} is the case $\sC$
has type $A$ and $\varpi$ is the first fundamental weight; so 
$L(\sC)$ is $\fsl(n)$ for some $n$ and $V(\varpi)$ is the vector representation.

Now we attempt to generalise the proof of Theorem~\ref{thm:main}.
The cactus group $\fC_r$ acts on the representation $\otimes^r V_q(\varpi)$.
This implies that $\fC_r$ acts on the set of highest weight tensors
in $\otimes^r V_q(\varpi)$, preserving the weight. 
In particular $\fC_r$ acts on $\Inv_r(V_q(\varpi))$, the space of invariant tensors. Let $c_q$ be the matrix representing $p_{r-1}\in\fC_r$. with respect to a
chosen basis.

Then $c_q$ has the following two of the three properties of an interpolating
matrix. These properties are independent of the choice of basis of
$\Inv_r(V_q(\varpi))$.
\begin{itemize}
	\item If the evaluation of $c_q$ at $q=1$ is defined then this gives the action of the long cycle.
	\item $c_q^r=1$.
\end{itemize}

Hence to show that $c_q$ is an interpolating matrix it remains to show
that there exists a basis of $\Inv_r(V_q(\varpi))$ such that $c_q$ is
regular at $q=0$ and the evaluation at $q=0$ is $c_0$. If
$V(\varpi)$ admits an invariant symplectic form then there are sign issues.
These sign issues can be dealt with by taking $V(\varpi)$ to be an odd super vector
space so that the symplectic form becomes a symmetric inner product.
This is proved in \cite{Westbury2016} using the theory of based modules
in \cite{Lusztig1993}.
It would be preferable to have a self-contained proof.

In the proof of Theorem~\ref{thm:main} we showed that a modification of the seminormal basis has the desired property. The seminormal basis
of $\Inv_r(V_q(\varpi))$ is defined for any $\varpi$ such that all nonzero
weight spaces of $V(\varpi)$ are one dimensional. Examples are,
\begin{itemize}
	\item All minuscule representations.
	\item The symmetric powers of the vector representation of $\fsl(n)$.
	\item The fundamental representation of $G_2$.
\end{itemize}
This raises the question of whether a modification of the seminormal basis
has the desired property.

\bibliography{All}{}
\bibliographystyle{plain}
\end{document}